\newtheorem{proposition}{Proposition}
\newtheorem{theorem}{Theorem}
\DeclareMathOperator{\II}{II}       
\DeclareMathOperator{\Sgn}{sgn}     
\DeclareMathOperator{\St}{St}       
\DeclareMathOperator{\Sym}{Sym}     
\DeclareMathOperator{\Tr}{tr}       
\DeclareMathOperator{\Vol}{vol}     
\title{The Fourier Transform of the Stiefelian Surface Measure}
\author{Nikolaos Chatzikonstantinou}
\address{Okinawa Institute of Science and Technology Graduate University, Okinawa 904-0495, Japan}
\keywords{Stiefel manifold, Fourier transform, asymptotic expansion}
\subjclass[2010]{Primary 42B20, 43A85; Secondary 33C10}
\begin{document}

\maketitle

\begin{abstract}
  Let $\St^{n}_{k}\subset\mathbb{R}^{n\times k}$ be the set of all $n\times k$ matrices whose columns are mutually orthogonal and of unit Euclidean length, and let $\mu_{n,k}$ be the surface measure corresponding to this embedding. We calculate the first term of the asymptotic expansion of the Fourier transform of $\mu_{n,k}$ for most directions, using the method of stationary phase. The asymptotic behavior near the remaining directions is unknown. We note some interesting connections to trace moments of orthogonal matrices, discrete random walks, Bessel functions, and pose some questions.
\end{abstract}

\tableofcontents

\section{Introduction}
\label{sec:introduction}
  Let $S^{n-1} := \{(x_{1},\dots, x_{n}) \in \mathbb R^{n} : x^{2}_{1} + \cdots + x^{2}_{n} = 1\}$ denote the unit sphere in $\mathbb R^{n}$, $n\geq 2$. This submanifold inherits a surface measure, denoted by $\sigma_{n-1}$. We define the \emph{Fourier transform} of a measure $\mu$ on $\mathbb R^{n}$ by the formula
  \begin{align}
    \label{eq:fourier-transform}
    \widehat\mu(\xi) & := \int e^{-2\pi i x\cdot\xi}d\mu(x), & \xi\in\mathbb R^{n}.
  \end{align}
  A simple computation gives
  \begin{align}
    \label{eq:sphere-bessel}
    \widehat{\sigma}_{n-1}(\xi) = 2\pi|\xi|^{-\frac{n-2}2}J_{\frac{n-2}2}(2\pi|\xi|),
  \end{align}
  where $J_{\nu}(t)$ is the \emph{Bessel function of first kind} and $\nu$-th order, see~\cite[Appendix B]{cit:grafakos-cfa}. From~\eqref{eq:sphere-bessel} one obtains the asymptotic expansion
  \begin{align}
    \label{eq:asymptotic-expansion}
    \widehat{\sigma}_{n-1}(\xi) & = 2\cos(2\pi(|\xi| - \frac{n-1}8))|\xi|^{-\frac{n-1}2} + O(|\xi|^{-\frac{n+1}2}), & \xi\to\infty.
  \end{align}
  Note that~\eqref{eq:asymptotic-expansion} does not make any claims about values at specific points. A full asymptotic formula is available for $J_{\nu}(t)$ (see~\cite[Chapter VIII]{cit:stein}) and consequentially for~\eqref{eq:sphere-bessel} as well, and the derivates may similarly be estimated using the identity $\frac d{dt} J_{\nu}(t) = \frac{\nu}2 J_{\nu}(t) - J_{\nu+1}(t)$.

  The above formulas may be useful in any operator or integral that involves the unit sphere. For some applications, see~\cite{cit:mattila}. It is of interest to replace the unit sphere with other submanifolds satisfying a certain property, and the \emph{method of stationary phase} allows us to again claim full asymptotic expansion formulas, albeit whose terms may be difficult to compute explicitly past the first one. We are inspired by problems and results involving configurations (see for instance~\cite{cit:iosevich-liu},~\cite{cit:palsson-sovine}), to seek to estimate for $n\geq 3$ and $2\leq k \leq n$ the following integral,

  \begin{align}
    \label{eq:stiefel-fourier}
    \widehat{\mu}_{n,k}(\Xi) & = \int_{\St^{n}_{k}} e^{-2\pi i\Tr (X^{t}\Xi)} d\mu_{n,k}, & \Xi \in \mathbb{R}^{n\times k},
  \end{align}
  where $\Tr$ denotes trace, $\St^{n}_{k} := \{X\in\mathbb{R}^{n\times k} : X^{t}X = I_{k}\}$ and $I_{k}$ is the $k\times k$ identity matrix. This set is a generalization of the unit sphere above, as $\St^{n}_{1} = S^{n-1}$. The set $\St^{n}_{k}$ is called the \emph{Stiefel manifold} and is the moduli space of unit $k$-frames in $\mathbb{R}^{n}$. By applying the method of stationary phase, we obtain as $\|\Xi\|\to\infty$ the formula,
  \begin{align}
    \label{eq:stiefel-asymptotics}
    \widehat{\mu}_{n,k}(\Xi) =&~2^{\frac{k(k-1)}4}\sum_{s\in\{-1, 1\}^{k}}\cos(2\pi\sum_{j=1}^{k}s_{j}(\lambda_{j} - \frac{n-j}8))~\times \nonumber \\
    & |\lambda_{1}\cdots\lambda_{k}|^{-\frac{n-k}2}\prod_{1\leq i < j\leq k}|s_{i}\lambda_{i} - s_{j}\lambda_{j}|^{-1/2} + O(\|\Xi\|^{-\frac{n-k+2}2}),
  \end{align}
  where $\lambda_{1},\dots,\lambda_{k}$ are the singular values of $\Xi$. The decay given by~\eqref{eq:stiefel-asymptotics} has the defect that it is not useful in two cases: when $\lambda_{i} \approx \lambda_{j}$ for some $i,j$ or when for some $i$ we have $\lambda_{i} \approx 0$. In those cases we are not able to obtain any results, although we expect that the result will be similar to~\eqref{eq:stiefel-asymptotics} with the division-by-zero terms deleted from the formula. Our expectation is based on two facts, a precise computation for $n=4,k=2$ (see~\eqref{eq:explicit-four}) and that when $\lambda_{k}$ is exactly equal to zero, we have a reduction
  \begin{align}
    \label{eq:reduction}
    \widehat{\mu}_{n,k}(\Xi) = \Vol(S^{n-k})~\widehat{\mu}_{n,k-1}(
    \begin{pmatrix}
      \lambda_{1}&&\\&\ddots&\\&&\lambda_{k-1}\\&\vdots&
    \end{pmatrix}),
  \end{align}
  where the other entries of the $n\times(k-1)$ matrix are zeros. This reduction may be iterated, and so it is reasonable to assume $k$ positive singular values. Also of interest is the ``most generate'' case $\lambda_{1} = \cdots = \lambda_{k}$, whose asymptotic expansion will say something about the growth of the moments of the trace function in $\mathbb{O}(k)$ (see for instance \cite{cit:pastur-vasilchuk}), since
  \begin{align}
    \label{eq:moments}
    \frac 1{\Vol{\mathbb{O}(k)}}\int_{\mathbb{O}(k)} e^{i\lambda\Tr X}dX & = \sum_{m=0}^{\infty} \frac{(i\lambda)^{m}}{m!}\mathbb E[(\Tr X)^{m}].
  \end{align}
  As a method of attacking the above we have some details presented in Section~\ref{sec:closer-look-at} on making explicit calculations, as the authors of~\cite{cit:palsson-sovine} did for $k=2$.

  \section{Proof of Main Result}
  \label{sec:proof}

    \subsection{Overview}
    \label{sec:overview}

    We wish to prove~\eqref{eq:stiefel-asymptotics}.

    Symmetry reduces the form of $\Xi$ to a rectangular-diagonal form with nonincreasing entries $\lambda_{1}\geq \cdots \geq \lambda_{k}\geq 0$. Fubini's theorem allows us to integrate out the columns of $X$ for which $\lambda_{j} = 0$, and so we may assume $\lambda_{k} > 0$. We calculate the tangent and normal planes of $\St^{n}_{k}$ and obtain an expression in matrix calculus for their projectors and for the second fundamental form. We calculate the points on $\St^{n}_{k}$ for which $\Xi$ belongs to the normal plane, and there we calculate the eigenvalues of the second fundamental form. The calculation is simple because it turns out to be diagonal. Finally using these eigenvalues we apply Theorem~\ref{thm:stationary-phase} to obtain~\eqref{eq:stiefel-asymptotics}. A reference for the geometric concepts of this section is~\cite{cit:docarmo}.

    \subsection{Notation}
    \label{sec:notation}

    For a matrix $X\in\mathbb{R}^{n\times k}$, denote by $X_{1}\in\mathbb{R}^{k\times k}$ and $X_{2}\in\mathbb{R}^{(n-k)\times k}$ the matrices such that $X = \begin{pmatrix}X_{1}\\X_{2}\end{pmatrix}$. Let $Y\in\St^{n}_{k}$ denote the rectangular-diagonal $n\times k$ matrix with entries equal to $1$. Let $\mathfrak{so}(k)$ denote the skew-symmetric $k\times k$ matrices with real entries, and $\Sym^{2}(k)$ denote the symmetric $k\times k$ matrices. Let $\nabla$ be the standard connection of Euclidean space. Somewhat redundantly, the inner product is available to us both in the matrix calculus and by the $\langle\cdot,\cdot\rangle$ symbol. The Kronecker delta is denoted by $\delta_{ij}$.

    \subsection{Reduction to Singular Values}
    \label{sec:reduct-diag-matr}

    The map $X\mapsto OXP$ for any $O\in\mathbb{O}(n)$, $P\in\mathbb{O}(k)$ preserves the measure $\mu_{n,k}$ and so by a change of variables we obtain
    \begin{align}
      \label{eq:symmetries}
      \widehat{\mu}_{n,k}(\Xi) & = \widehat{\mu}_{n,k}(O\Xi P).
    \end{align}
    Using the singular value decomposition of $\Xi$, we may choose $O,P$ so that $O\Xi P$ is rectangular-diagonal with nonincreasing entries $\lambda_{1}\geq\cdots\geq\lambda_{k}\geq 0$. Let $\Pi(j)$, $1 \leq j \leq k$ be the plane that orthogonally complements the plane spanned by the first $j$ columns of $X\in\St^{n}_{k}$, and $\sigma_{\Pi(j)}$ be the measure of the unit sphere of $\Pi(j)$. We have the equality
    \begin{align}
      \label{eq:measure-decomposition}
      \mu_{n,k} = d\sigma_{\Pi(k-1)}(x_{k})\cdots d\sigma_{\Pi(1)}(x_{2})d\sigma_{\mathbb{R}^{n}}(x_{1})
    \end{align}
    where $x_{1},\dots,x_{k}$ are the columns of $X$, and so if we have $\lambda_{k} = \lambda_{k-1} = \cdots = \lambda_{k_{0}+1} = 0$ for some $k_{0}$, then by integrating those columns out, we obtain
    \begin{align}
      \label{eq:reduction2}
      \widehat{\mu}_{n,k}(\Xi) & = \left(\prod_{j=k_{0}+1}^{k} \Vol(S^{n-j})\right)~\widehat{\mu}_{n,k_{0}}(
                                 \begin{pmatrix}
                                   \lambda_{1}&&\\&\ddots&\\&&\lambda_{k_{0}}\\&\vdots&
                                 \end{pmatrix}
                                 ),
    \end{align}
    where the rest of the elements of the matrix in~\eqref{eq:reduction2} are zeros. Thus we may assume that $\Xi$ takes the special form
    \begin{align}
      \label{eq:singular-values}
      \Xi & =
            \begin{pmatrix}
              \lambda_{1}&&\\&\ddots&\\&&\lambda_{k}\\&\vdots&
            \end{pmatrix}, & \lambda_{1}\geq\cdots\geq\lambda_{k}>0.
    \end{align}

    \subsection{Second Fundamental Form}
    \label{sec:second-order-study}

    The trick to computing first and second-order expansions is to express the Gauss map and second fundamental form using matrix calculus, and this approach is from~\cite{cit:absil-mahony-sepulchre}. It may help to put this in the context of the sphere. For a vector $u\in\mathbb{R}^{n}$ the spherical tangential projector at $x\in S^{n-1}$ is given by $P_{x}(u) = (I_{n} - xx^{t})u$, and by differentiating in the direction of $v\in T_{x}S^{n-1}$ we obtain, for unit basis vectors $e_{i}, e_{j}$ of the tangent space at $x$,
    \begin{align*}
      \langle \nabla_{e_{i}}P_{x}(e_{j}), x\rangle
      & = -\langle e_{i}x^{t}e_{j} + xe_{i}^{t}e_{j}, x\rangle \\
      & = -\delta_{ij},
    \end{align*}
    which is exactly $\langle\II_{x}(e_{i}, e_{j}),\xi\rangle$ where $\xi~(=x)$ is the outward unit normal at $x$. Indeed, one may write the sphere near $(0,\dots,0,1)$ as the graph of a function
    \begin{align*}
      x & \mapsto 1-\delta_{ij}x_{i}x_{j}/2 + O(|x|^{3}), & x\in\mathbb{R}^{n-1}.
    \end{align*}

      \subsubsection{Tangent and Normal Planes}
      \label{sec:tang-norm-plan}

      By differentiating the equation $X^{t}X = I_{k}$ we obtain $dX^{t}X + X^{t}dX = 0$, and taking $X = Y$ gives $dX^{t}_{1} + dX_{1} = 0$. Thus the tangent plane of $\St^{n}_{k}$ at $Y$ is given by all matrices of the form $\begin{pmatrix}A_{1}\\A_{2}\end{pmatrix}$, where $A_{1}\in\mathfrak{so}(k)$ and $A_{2}\in\mathbb{R}^{(n-k)\times k}$. Since symmetric matrices orthogonally complement skew-symmetric matrices, the normal space is given by all matrices of the form $\begin{pmatrix}N_{1}\\0\end{pmatrix}$ where $N_{1}\in\Sym^{2}(k)$. Using the orthogonal decomposition $\mathbb{R}^{k\times k}\ni L \mapsto (\frac{L-L^{t}}2, \frac{L+L^{t}}2)$ and the equations $Y^{t}A = A_{1}$ and $YA_{1} = \begin{pmatrix}A_{1}\\0\end{pmatrix}$, one may write the projector at $Y$ by
      \begin{align}
        \label{eq:tangential-projector}
        P_{Y}(A) = (I_{n} - YY^{t})A+\frac 12Y(Y^{t}A - A^{t}Y).
      \end{align}
      The normal projector is given by $P^{\bot} = I_{n} - P$, and so
      \begin{align}
        \label{eq:normal-projector}
        P^{\bot}_{Y}(A) = \frac 12 Y(Y^{t}A + A^{t}Y).
      \end{align}
      In fact, formulas~\eqref{eq:tangential-projector} and~\eqref{eq:normal-projector} hold even when the special point $Y$ is replaced by a general point $X\in\St^{n}_{k}$, which one sees by writing $Y = OX$ for some $O\in\mathbb{O}(n)$ and then for $B\in T_{X}\St^{n}_{k}$ setting $A = O^{t}B$ in~\eqref{eq:tangential-projector},~\eqref{eq:normal-projector}. It follows that the form of the tangent and normal spaces at a general point $X\in\St^{n}_{k}$ is given by
      \begin{align}
        \label{eq:tangent-space}
        T_{X}\St^{n}_{k} & = \{XA_1 + KA_2 : A_1\in\mathfrak{so}(k), A_2\in\mathbb{R}^{(n-k)\times k}\}, & X^{t}K = 0, \\
        \label{eq:normal-space}
        T^{\bot}_{X}\St^{n}_{k} & = \{XN_1 : N_1\in\Sym^2(k)\}.
      \end{align}
      where $K$ is any $n\times (n-k)$ matrix of maximal rank solving $X^{t}K = 0$.

      \subsubsection{Differentiating the Projector}
      \label{sec:diff-proj}

      Let $X\in\St^{n}_{k}$, $A,B\in T_{X}\St^{n}_{k}$ and extend $B$ to the vector field $\bar B := P_{Z}B, Z\in\mathbb{R}^{n\times k}$. We define the second fundamental form of $\St^{n}_{k}$ at $X$ in the usual manner,
      \begin{align}
        \label{eq:second-fundamental-form}
        \II_{X}(A,B) & := P^{\bot}_{X}(\nabla_{A}\bar{B})
      \end{align}
      and claim that
      \begin{align}
        \label{eq:differentiating-projector}
        \II_{X}(A,B) = \nabla_{A}P_{X}(B),
      \end{align}
      which one can see by noting that the map $\St^{n}_{k}\ni Z\mapsto \|P_{Z}(B)\|$ has a critical point at $Z = X$, where $P_{X}(B) = B$. This allows us to compute,
      \begin{proposition}[from~\cite{cit:absil-mahony-sepulchre}]
        \label{prop:second-fundamental-form}
        For $X\in\St^{n}_{k}$ and $A,B\in T_{X}\St^{n}_{k}$,
        \begin{align}
          \label{eq:second-fundamental-form-matrix}
          \II_{X}(A,B) = - \frac 12 X(A^{t}B + B^{t}A).
        \end{align}
      \end{proposition}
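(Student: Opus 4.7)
The plan is to use the identity~\eqref{eq:differentiating-projector}, which was justified immediately before the proposition by the critical-point observation, and to carry out the differentiation of the projector explicitly using the formulas~\eqref{eq:tangential-projector}--\eqref{eq:normal-projector}. Since $P_X + P_X^{\bot} = I$ as operators on $\mathbb R^{n\times k}$, differentiating in a direction $A$ gives $\nabla_A P_X(B) = -\nabla_A P^{\bot}_X(B)$, so it is equivalent and notationally cleaner to differentiate the normal projector, which by~\eqref{eq:normal-projector} (extended to a general base point as noted after its statement) is $P^{\bot}_Z(B) = \tfrac12 Z(Z^t B + B^t Z)$.

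Concretely, I would take a curve $\gamma(t)\subset \St^n_k$ with $\gamma(0)=X$, $\gamma'(0)=A$, and form the extension $\bar B(t) := P_{\gamma(t)}(B)$. Since $B\in T_X\St^n_k$ we have $\bar B(0)=B$, and by~\eqref{eq:second-fundamental-form} together with~\eqref{eq:differentiating-projector},
\begin{align*}
  \II_X(A,B) \;=\; \nabla_A P_X(B) \;=\; -\tfrac12\,\tfrac{d}{dt}\Big|_{t=0}\gamma(t)\bigl(\gamma(t)^t B + B^t\gamma(t)\bigr).
\end{align*}
Applying the product rule gives
\begin{align*}
  \II_X(A,B) \;=\; -\tfrac12\,A\bigl(X^t B + B^t X\bigr) \;-\; \tfrac12\,X\bigl(A^t B + B^t A\bigr).
\end{align*}

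The next step is to observe that the first summand vanishes: differentiating the defining relation $X^t X = I_k$ along any tangent vector $B$ yields $B^t X + X^t B = 0$, so $A(X^t B + B^t X)=0$. This leaves $\II_X(A,B) = -\tfrac12 X(A^t B + B^t A)$, which is the claimed formula. As a final sanity check I would verify that the right-hand side is in the normal space: $A^t B + B^t A \in \Sym^2(k)$, hence $X(A^t B + B^t A)$ has the form $XN_1$ required by~\eqref{eq:normal-space}, so no further projection is needed.

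The calculation is essentially routine once the projector formula is available at a general point, so I do not anticipate a serious obstacle. The only nontrivial input is the symmetry relation $X^t B + B^t X = 0$ for tangent $B$, which is exactly the linearization of $X^t X = I_k$, and the fact that the image $X \cdot \Sym^2(k)$ already lies in the normal bundle, so $P^{\bot}_X$ acts as the identity on it.
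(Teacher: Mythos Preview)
Your argument is correct and follows essentially the same route as the paper: differentiate the projector formula and use the tangent-space relation $X^{t}B + B^{t}X = 0$ to kill the spurious term. The only cosmetic differences are that you differentiate $P^{\bot}$ rather than $P$ (which saves a couple of product-rule terms) and work directly at a general point $X$ instead of first reducing to the special point $Y$; neither changes the substance of the computation.
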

      \begin{proof}
        It suffices to prove the formula at $X = Y$, as it follows for other points by the homogeneity of $\St^{n}_{k}$, as in Section~\ref{sec:tang-norm-plan}. By~\eqref{eq:differentiating-projector} and~\eqref{eq:tangential-projector},
        \begin{align}
          \label{eq:second-fundamental-form-calculation}
          \II_{Y}(A,B)
          & = -AY^{t}B - YA^{t}B + \frac 12 A(Y^{t}B - B^{t}Y) + \frac 12 Y(A^{t}B - B^{t}A) \\
          & = -\frac 12(A(Y^{t}B + B^{t}Y) + Y(A^{t}B + B^{t}A)).
        \end{align}
        The first term is zero since $Y^{t}B = B_{1}\in\mathfrak{so}(k)$ according to Section~\ref{sec:tang-norm-plan}.
      \end{proof}

      \subsubsection{Calculating the Eigenvalues}
      \label{sec:calc-eigenv}

      The points $X\in\St^{n}_{k}$ for which $\Xi$ belongs to the normal space are given by the rectagular-diagonal matrices with diagonal elements $\pm 1$, since according to~\eqref{eq:normal-space} they must solve the equation $XN_{1} = \Xi$ for some $N_{1}\in\Sym^{2}(k)$, which means that $N_{1}^{2} = \Xi^{t}\Xi$ and so the fact follows by the diagonal reduction~\eqref{eq:singular-values}. In particular $Y$ is one of them. In fact, they may all be parametrized by $s\in\{-1,1\}^{k}$, as they are all the rectangular-diagonal matrices with $\pm 1$ as their diagonal elements. We calculate $\langle\II_{Y}, \Xi\rangle$, as the other cases follow by the homogeneity property of $\St^{n}_{k}$ as in Section~\ref{sec:tang-norm-plan}.

      We wish to calculate $\langle\II_{Y}(A,B), \Xi\rangle$ for $A,B$ elements of a basis of the tangent space at $Y$. Luckily, only the diagonal terms are nonzero for the obvious choice of basis: for the $\mathfrak{so}(k)$-part of the tangent space, the basis is given by $\mathcal A_{i,j} := (E_{i,j} - E_{j,i})/\sqrt{2}$ for $1 \leq i < j \leq k$, where $E_{i,j}\in\mathbb{R}^{n\times k}$ is the matrix with $1$ in the $i$-th row and $j$-th column and zeros elsewhere, and for the Grassmannian part, the rest of the basis is given by $E_{i,j}$ for $k < i \leq n$ and $1 \leq j \leq k$. It is then easy to verify using~\eqref{eq:second-fundamental-form-matrix} the basis relations,
      \begin{align}
        \label{eq:eigenvalues-so-part}
        \langle \II_{Y}(\mathcal{A}_{i,j}, \mathcal{A}_{i,j}), \Xi\rangle & = - \frac 12(\lambda_{i} + \lambda_{j}), & 1 \leq i < j \leq k, \\
        \label{eq:eigenvalues-grassman-part}
        \langle \II_{Y}(E_{i,j}, E_{i,j}), \Xi\rangle & = - \lambda_{j}, & 1 \leq j \leq k < i \leq n, \\
        \langle \II_{Y}(A, B), \Xi\rangle & = 0, &\text{other basis vectors}. \nonumber
      \end{align}
      For the other points parametrized by $s\in\{-1,1\}^{k}$, replace $\lambda_{i}$ with $s_{i}\lambda_{i}$ in~\eqref{eq:eigenvalues-so-part} and~\eqref{eq:eigenvalues-grassman-part}. Indeed the signature of the second fundamental form for such a point is easily computed to be equal to
      \begin{align}
        \label{eq:signature}
        \sum_{j=1}^{k}s_{j}(j-n),
      \end{align}
      and the absolute value of the determinant of its second fundamental form to be equal to
      \begin{align}
        \label{eq:determinant}
        2^{-k(k-1)/2}|\lambda_{1}\cdots\lambda_{k}|^{n-k}\prod_{1\leq i < j\leq k}|s_{i}\lambda_{i} + s_{j}\lambda_{j}|.
      \end{align}

    \subsection{Application of the Method of Stationary Phase}
    \label{sec:appl-meth-stat}

    Theorem~\ref{thm:stationary-phase}, stated below, may be found for $m=n-1$ in~\cite[Chapter VIII]{cit:stein}. It follows for general $m$ by simple geometric considerations, although we could not find a reference that contains it in this exact form. We apply Theorem~\ref{thm:stationary-phase} using the calculations of Section~\ref{sec:calc-eigenv}, in particular~\eqref{eq:signature} and~\eqref{eq:determinant}, to obtain~\eqref{eq:stiefel-asymptotics}.
    \begin{theorem}[Stationary Phase]
      \label{thm:stationary-phase}
      Let $M \hookrightarrow \mathbb{R}^n$ be an immersed $m$-dimensional compact submanifold of $\mathbb{R}^n$ with surface measure denoted by $\mu$. Let $\xi\in\mathbb{R}^n$ be a unit vector and let $\langle\II_x, \xi\rangle$ denote the second fundamental form of $M$ at $x\in M$ in the direction of $\xi$. Assume there are finitely many points $x_1, \dots, x_N$ where $\xi$ belongs to the normal space of $M$ and assume that $\det\langle \II, \xi\rangle \not = 0$ there. Denote by $\Sgn\langle \II_{x}, \xi\rangle$ the number of positive eigenvalues minus the number of negative eigenvalues of the second fundamental form at $x$. Then for $\tau\to+\infty$ and continuously in $\xi$,
      \begin{align*}
        \widehat{\mu}(\tau\xi) & = \tau^{-m/2}~\sum_{j=1}^N e^{-2\pi i(\tau x_{j}\cdot \xi + \Sgn\langle\II_{x_j}, \xi\rangle/8)}|\det\langle\II_{x_j},\xi\rangle|^{-1/2} + O(\tau^{-m/2-1}).
      \end{align*}
    \end{theorem}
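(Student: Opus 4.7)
The plan is to reduce the statement to the classical stationary phase lemma on $\mathbb{R}^m$ via a partition of unity and a local graph parametrization. First I would fix a smooth partition of unity $\{\chi_0,\chi_1,\dots,\chi_N\}$ on $M$ with $\chi_j$ supported in a small neighborhood $U_j$ of $x_j$ for $j\geq 1$ and $\chi_0$ supported away from all the $x_j$. On the support of $\chi_0$ the tangential component of $\xi$ is bounded away from zero, so iterated integration by parts against a smooth tangential vector field aligned with this component shows that the piece corresponding to $\chi_0$ decays faster than any negative power of $\tau$.

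For each $j\geq 1$, I would parametrize $M$ near $x_j$ as the graph of a smooth map $\phi_j\colon V_j\subset T_{x_j}M\to N_{x_j}M$ satisfying $\phi_j(0)=0$ and $D\phi_j(0)=0$, using an orthonormal basis of $T_{x_j}M$ as coordinates. A point of $M$ near $x_j$ then has the form $x_j+y+\phi_j(y)$, and since $\xi\in N_{x_j}M$,
\begin{align*}
\xi\cdot(x_j+y+\phi_j(y)) = \xi\cdot x_j + \xi\cdot\phi_j(y).
\end{align*}
Set $\psi_j(y) := \xi\cdot\phi_j(y)$; then $\psi_j(0)=0$, $D\psi_j(0)=0$, and $D^2\psi_j(0)$ equals the matrix of the bilinear form $\langle\II_{x_j},\xi\rangle$ in the chosen orthonormal basis, by the standard identification of the second fundamental form with the normal component of the second derivative of the embedding. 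The pullback of $\mu$ has density $\sqrt{\det g_j(y)}$ with $g_j(0)=I_m$, so the relevant Jacobian equals $1$ at the critical point.

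The local integral is then
\begin{align*}
e^{-2\pi i\tau\,\xi\cdot x_j}\int_{V_j} e^{-2\pi i\tau\psi_j(y)}\chi_j(x_j+y+\phi_j(y))\sqrt{\det g_j(y)}\,dy,
\end{align*}
a scalar oscillatory integral on $\mathbb{R}^m$ with a unique nondegenerate critical point at $y=0$. At this point I would invoke the classical stationary phase formula --- whose proof in the hypersurface case in~\cite[Ch.~VIII]{cit:stein} makes no essential use of the ambient codimension once a graph parametrization has been chosen --- to extract a leading term $\tau^{-m/2}|\det\langle\II_{x_j},\xi\rangle|^{-1/2} e^{-i\pi\Sgn\langle\II_{x_j},\xi\rangle/4}$ with error $O(\tau^{-m/2-1})$. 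Rewriting $e^{-i\pi\Sgn/4}=e^{-2\pi i\Sgn/8}$ and summing over $j$ produces the stated formula.

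The main bookkeeping obstacle is tracking the sign of the signature through the $e^{-2\pi i\,\cdot}$ Fourier convention: the effective Morse phase in the standard form of the stationary phase lemma is $-\psi_j$, whose signature is $-\Sgn\langle\II_{x_j},\xi\rangle$, so the branch of the square root must be chosen consistently to reproduce the exponent $-\Sgn\langle\II_{x_j},\xi\rangle/8$ appearing in the statement rather than its opposite. The claimed continuity in $\xi$ is a free consequence of the continuous dependence of the critical points, Hessian determinants, and signatures on $\xi$ in the nondegenerate regime, together with the uniformity of the stationary phase remainder on compact sets of parameters.
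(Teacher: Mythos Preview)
Your proposal is correct and is precisely the ``simple geometric considerations'' the paper alludes to but does not spell out: the paper does not actually prove Theorem~\ref{thm:stationary-phase}, it merely cites the hypersurface case $m=n-1$ from \cite[Ch.~VIII]{cit:stein} and asserts the general-codimension version follows. Your partition-of-unity plus graph-parametrization reduction to the scalar stationary phase lemma is the standard way to carry this out, and your bookkeeping of the signature under the $e^{-2\pi i\,\cdot}$ convention is accurate.
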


  \section{A Closer Look at the Problematic Singular Values}
  \label{sec:closer-look-at}

    The only explicit computation we are able to carry out presently is for $k=2$, and this was first done in~\cite{cit:palsson-sovine}. We find using~\eqref{eq:measure-decomposition} that
    \begin{align}
      \label{eq:explicit-computation}
      \widehat{\mu}_{n,2}(\begin{pmatrix}\kappa & \\ & \lambda \\ \vdots & \vdots\end{pmatrix}) & =
      \int e^{-2\pi i (\kappa x\cdot e_{1} + \lambda y\cdot e_{2})} d\mu_{n,2}(x,y) \\
      & = \iint e^{-2\pi i\lambda y\cdot e_{2}} d\sigma_{\bot,x}(y) e^{2\pi i \kappa x\cdot e_{1}} d\sigma_{n-1}(x)
    \end{align}
    where $\sigma_{\bot,x}$ is the surface measure of the unit $(n-2)$-sphere of the hyperplane perpendicular to $x$. It is easy to see that this is equal to
    \begin{align}
      \label{eq:explicit-computation-2}
      \int e^{-2\pi i \kappa x\cdot e_{1}} \widehat{\sigma}_{n-2}(\lambda\sqrt{1-x_{2}^{2}})d\sigma_{n-1}(x).
    \end{align}
    By applying the coarea formula on the function $f(x) = x_{2}$, we further obtain
    \begin{align}
      \label{eq:explicit-computation-3}
      \Vol(S^{n-2})~\int_{-1}^{1} \widehat{\sigma}_{n-2}(\kappa\sqrt{1-t^{2}})\widehat{\sigma}_{n-2}(\lambda\sqrt{1-t^{2}})(1-t^{2})^{\frac{n-3}2}dt,
    \end{align}
    which according to~\eqref{eq:sphere-bessel} is also equal to
    \begin{align}
      \label{eq:explicit-computation-4}
      \frac{4\pi^{2}\Vol(S^{n-2})}{(\kappa\lambda)^{\frac{n-3}2}}~\int_{-1}^{1} J_{\frac{n-3}2}(2\pi\kappa\sqrt{1-t^{2}})J_{\frac{n-3}2}(2\pi\lambda\sqrt{1-t^{2}})dt.
    \end{align}
    The authors in~\cite{cit:palsson-sovine} note that they can not extract the full asymptotic decay from~\eqref{eq:explicit-computation-4}. On the other hand, the authors in~\cite{cit:iosevich-liu} obtain our result~\eqref{eq:stiefel-asymptotics} for $k=2$ using the method of stationary phase.

  \section{Future Directions}
  \label{sec:future-directions}
  For Section~\ref{sec:closer-look-at} we pose three questions:
    \begin{enumerate}
      \item Can the asymptotic decay predicted by~\eqref{eq:stiefel-asymptotics} be recovered from~\eqref{eq:explicit-computation-4}? Indeed, one could hope for the full asymptotic expansion, not just the first term. Presumably progress here would require identities involving Bessel functions, see below for one such case.
      \item Can one claim some (reduced) decay for $\lambda\approx 0$ or for $\kappa\approx\lambda$ using~\eqref{eq:explicit-computation-4}? Even the case $\kappa=\lambda$ would be interesting.
      \item Repeat the above two questions for $k\geq 3$: can $\widehat{\mu}_{n,k}$ be written out explicitly in terms of Bessel functions and can its full asymptotic expansion be computed? Can the cases $\lambda_{i}\approx 0$ and $\lambda_{i}\approx\lambda_{j}$ (for some $i\not= j$) be dealt with? The case $k=3$ seems to be within reach.
    \end{enumerate}
    We have the following remarks to make. First, using $J_{1/2}(t) = \sqrt{\frac 2{\pi t}}\sin(t)$, one obtains for $n=4$ that
    \begin{align}
      \label{eq:explicit-four}
      \widehat{\mu}_{4,2}(\Xi)
      & = \frac{16\pi}{\kappa\lambda}[J_{0}(2\pi(\kappa-\lambda))-J_{0}(2\pi(\kappa+\lambda))],
    \end{align}
    from which we may obtain the full asymptotic expansion. Second, by setting $\tilde{\sigma}_{r}$ to be the normalized probability measure on the sphere of radius $r>0$ centered at the origin, we may rewrite~\eqref{eq:explicit-computation-3} into a convolution,
    \begin{align}
      \label{eq:random-walk}
      \Vol(S^{n-2})\Vol(S^{n-1})^{2}\int_{-1}^{1} (\tilde{\sigma}_{\kappa}*\tilde{\sigma}_{\lambda})^{\vee}(\sqrt{1-t^{2}})(1-t^{2})^{\frac{n-3}2}dt,
    \end{align}
    which connects the computation with a 2-step random walk, which is an interesting problem in itself, see~\cite{cit:kluyver},~\cite{cit:borwein-et-al}. From this observation we may anticipate the singular behavior at $\kappa = \lambda$ for all $n\geq 3$ and $k=2$.

    If one can indeed achieve estimates for $\lambda_{i}\approx 0$ and $\lambda_{i}\approx\lambda_{j}$, then those may be applied (as the authors in~\cite{cit:palsson-sovine} do for $k=2$) using~\cite[Theorem~1.1]{cit:grafakos-he-honzik-park} to obtain continuity results for simplicial operators for $k\geq 3$ analogous to~\cite{cit:palsson-sovine} for $k=2$. Such results so obtained may also be contrasted with those obtained by the authors in~\cite{cit:iosevich-palsson-sovine}.

    Finally, one may compare this result to a related formula known as the Harish-Chandra-Itzykson-Zuber integral formula, see~\cite{cit:harish-chandra} and~\cite{cit:itzykson-zuber}, which is an exact formula for the integral $\int_{\mathbb{U}(n)}e^{\lambda \Tr(AUBU^*)}dU$. This formula has a quadratic phase and is over the unitary group instead of the real orthogonal group, in contrast to our result. One could seek results of our kind for $\mathbb{O}(n)$ replaced by $\mathbb{U}(n)$ and vice versa using $\mathbb{O}(n)$ for the quadratic phase.

    \section*{Acknowledgements}
    The author wishes to thank Allan Greenleaf for helpful discussions on the method of stationary phase and the second-order geometry of the Stiefel manifold and Daniel Spector for pointing out~\eqref{eq:explicit-four} and their encouragement to work on this problem.
    \label{sec:acknowledgements}

\end{document}